\newtheorem{Theorem}{Theorem}[section]
\newtheorem{Lemma}[Theorem]{Lemma}
\newtheorem{Proposition}[Theorem]{Proposition}
 { \theoremstyle{definition}
\newtheorem{Definition}[Theorem]{Definition}

\newtheorem{Example}[Theorem]{Example}
\newtheorem{Remark}[Theorem]{Remark}
\newtheorem{Conjecture}[Theorem]{Conjecture}}
\numberwithin{equation}{section}
\newcommand{\N}{\mathbb{N}}   
\renewcommand{\epsilon}{\varepsilon}    
\newcommand{\calP}{\mathcal{P}}
\begin{document}
\allowdisplaybreaks

\newcommand{\arXivNumber}{2208.11297}

\renewcommand{\PaperNumber}{004}

\FirstPageHeading

\ShortArticleName{Law of Large Numbers for Roots of Finite Free Multiplicative Convolution}

\ArticleName{Law of Large Numbers for Roots of Finite Free\\ Multiplicative Convolution of Polynomials}

\Author{Katsunori FUJIE~$^{\rm a}$ and Yuki UEDA~$^{\rm b}$}

\AuthorNameForHeading{K.~Fujie and Y.~Ueda}

\Address{$^{\rm a)}$~Department of Mathematics, Hokkaido University,\\
\hphantom{$^{\rm a)}$}~North 10 West 8, Kita-Ku, Sapporo, Hokkaido, 060-0810, Japan}
\EmailD{\href{mailto:kfujie@eis.hokudai.ac.jp}{kfujie@eis.hokudai.ac.jp}}

\Address{$^{\rm b)}$~Department of Mathematics, Hokkaido University of Education,\\
\hphantom{$^{\rm b)}$}~Hokumon-cho 9, Asahikawa, Hokkaido, 070-8621, Japan}
\EmailD{\href{mailto:ueda.yuki@a.hokkyodai.ac.jp}{ueda.yuki@a.hokkyodai.ac.jp}}

\ArticleDates{Received August 25, 2022, in final form January 09, 2023; Published online January 14, 2023}

\Abstract{We provide the law of large numbers for roots of finite free multiplicative convolution of polynomials which have only non-negative real roots. Moreover, we study the empirical root distributions of limit polynomials obtained through the law of large numbers of finite free multiplicative convolution when their degree tends to infinity.}

\Keywords{finite free probability; finite free multiplicative convolution; law of large numbers}

\Classification{46L54; 26C10; 60F05}

\section{Introduction}
\subsection{Free probability theory}

Denote by $\calP$ and $\calP_+$ the set of all probability measures on $\mathbb{R}$ and $[0,\infty)$, respectively.
Moreover, we define $\calP_c$ and $\calP_{+,c}$ as the set of all compactly supported probability measures on $\mathbb{R}$ and $[0,\infty)$, respectively.
The notation $\xrightarrow{w}$ means the weak convergence of sequences of probability measures.

Voiculescu initiated free probability theory to attack problems related to the free product of operator algebras.
One of the most important notions in this theory is the {\it free independence} of non-commutative random variables.
In this paper, we say \textit{free random variables} as freely independent non-commutative random variables for short.

The {\it law of large numbers} (LLN) is well-known as a result that a sample average of independent identically distributed random variables with finite mean concentrates on the theoretical mean when the sample size is sufficiently large.
As the analogous result on classical probability, the LLN for free random variables was also established (see \cite{LP97}).
More precisely, for any $\mu\in\calP$ with mean $\alpha$, we have $D_{1/n}\big(\mu^{\boxplus n}\big)\xrightarrow{w}\delta_\alpha$ as $n\rightarrow\infty$, where (i) $D_{c}(\nu)$ is the push-forward of a~measure~$\nu$ by the mapping $x\mapsto cx$ for $c\in \mathbb{R}$ and (ii) $\mu\boxplus\nu$ is called the {\it free additive convolution}, which is the probability distribution of addition $X+Y$ of free random variables $X$ and $Y$ distributed as $\mu\in \calP$ and $\nu\in \calP$, respectively, in particular $\mu^{\boxplus n}$ is the $n$-th power of free additive convolution of $\mu$.

The LLN for multiplication of (classically or freely) independent positive random variables are also considered.
In classical probability, it is easy to formulate and investigate the LLN of multiplication by considering the exponential mapping of those random variables.
However, it is not easy to consider the LLN for multiplication in free probability since ${\rm e}^{X+Y} \neq {\rm e}^X {\rm e}^Y$ for (non-commutative) random variables $X$ and $Y$. In \cite{Tucci10}, the LLN for multiplication of free bounded positive random variables was obtained. After that, this LLN was extended to one for multiplication of free positive random variables (which are not necessary to be bounded) in \cite{HM13}. More precisely, the LLN for multiplication of free positive random variables can be formulated as the convergence of
\begin{align}\label{eq:LLN_freemultiplicative}
\Big\{ \big(\mu^{\boxtimes n}\big)^{\frac{1}{n}}  \Big\}_{n\in \mathbb{N}},
\end{align}
for $\mu \in \calP_+$, where (i) $\nu^\alpha$ denotes the push forward of a measure $\nu$ by the mapping $x\mapsto x^\alpha$ for $\alpha\in\mathbb{R}$ and (ii) $\mu\boxtimes\nu$ is called the {\it free multiplicative convolution}, which is the probability distribution of multiplication $\sqrt{X}Y\sqrt{X}$ of free random variables $X\ge 0$ and $Y$ distributed as $\mu \in \calP_+$ and $\nu \in \calP$, respectively,  in particular $\mu^{\boxtimes n}$ is the $n$-th power of free multiplicative convolution of $\mu$ (see \cite{Voi87} and \cite{BV93} for more details).
According to \cite{HM13}, the limit distribution of the sequence \eqref{eq:LLN_freemultiplicative} always exists and is denoted by $\Phi(\mu)$. For $\mu\neq \delta_0$, the measure $\Phi(\mu) \in \calP_+$ is characterized by the S-transform (see Section~\ref{section2.1} for details).

\subsection{Finite free probability and main result}

In \cite{Marcus} and \cite{MSS22}, Marcus, Spielman and Srivastava investigated a link between polynomial convolutions and the sum of random matrices related to free probability theory. For monic polynomials $p(x)=\sum_{i=0}^d (-1)^i p_i x^{d-i}$ and $q(x)=\sum_{i=0}^d (-1)^i q_i x^{d-i}$ of degree $d$, the {\it finite free additive convolution} $p\boxplus_d q$ is defined by
\begin{align*}
\big(p\boxplus_d q\big) (x):=\sum_{i+j\le d}(-1)^{i+j} \frac{(d-i)!(d-j)!}{(d-i-j)!d!}p_iq_j x^{d-i-j}.
\end{align*}
The finite free additive convolution plays an important role in studying characteristic polynomials of the sum of (random) matrices. More precisely, for $d\times d$ real symmetric matrices~$A$ and~$B$ with characteristic polynomials $\chi_A$ and $\chi_B$, respectively, $\chi_A\boxplus_d \chi_B$ is given by
\begin{align*}
\big(\chi_A\boxplus_d \chi_B\big)(x)=\mathbb{E}_Q \det[xI_d- A- QBQ^*],
\end{align*}
where the expectation is taken over unitary matrices $Q$ distributed uniformly on the unitary group in dimension $d$.
Furthermore, the finite free additive convolution is very closely related to free additive convolution because it turned out to be that the empirical root distribution of $p_d\boxplus_d q_d$ converges weakly to $\mu\boxplus \nu$ when $d$ tends to infinity, where $\mu,\nu\in \calP$ are limit laws of sequences of empirical root distribution of $p_d$ and $q_d$, respectively.
Moreover, Marcus \cite{Marcus} obtained the typical limit theorems (LLN, the central limit theorem and the Poisson's law of small numbers, etc.) for finite free additive convolution.
According to the evidence above, we can treat finite free probability as a discrete approximation theory for free probability.

In this paper, we investigate the LLN for finite free multiplicative convolution. The finite free multiplicative convolution $\boxtimes_d$ of monic polynomials $p(x)=\sum_{i=0}^d (-1)^i p_i x^{d-i}$ and $q(x)=\sum_{i=0}^d (-1)^i q_i x^{d-i}$ of degree $d$ with non-negative real roots is defined by
\begin{align*}
\big(p \boxtimes_d q\big)(x):=\sum_{i=0}^d (-1)^i\frac{p_iq_i}{\binom{d}{i}} x^{d-i}.
\end{align*}
In particular,  $p^{\boxtimes_dn}$ denotes the $n$-th power of finite free multiplicative convolution of $p$.
We formulate the LLN for roots of finite free multiplicative convolution of polynomials as the convergence of a sequence of
\begin{align}\label{eq:rootlimit}
\Big\{ \big(\lambda_{1}^{(n)}\big)^{\frac{1}{n}}  \ge \dots \ge \big(\lambda_{d}^{(n)}\big)^{\frac{1}{n}} \Big\}_{n \in \N},
\end{align}
where $\lambda_{i}^{(n)}$ is the $i$-th (non-negative) root of $p^{\boxtimes_dn}$ for a monic polynomial $p$ of degree $d$ with non-negative roots.

We define the notations for later use.
For a finite multiset $\Lambda=\{\lambda_1,\dots, \lambda_d\}$ of complex numbers, the $i$-th elementary symmetric polynomials $e_i(\Lambda)$ is denoted by
\begin{align*}
e_i(\Lambda):= \sum_{J \subset [d],\, |J|=i} \bigg( \prod_{j \in J} \lambda_j \bigg), \qquad e_0(\Lambda):=1,
\end{align*}
where $[d]:=\{1,2,\dots, d\}$.
In addition, we define
\begin{align*}
\widetilde{e}_i(\Lambda):=\frac{e_i(\Lambda)}{\binom{d}{i}}.
\end{align*}
for each $0 \le i \le d$.

We then obtain the limit theorem for the sequence \eqref{eq:rootlimit} as follows.

\begin{Theorem}[LLN for $\boxtimes_d$]\label{thm:main}
Let $p$ be a monic polynomial of degree $d$ with non-negative real roots $\Lambda$ and let us set $k=k(p)$ as the number of zeros in $\Lambda$.
Then,
\begin{align*}
\lim_{n\rightarrow\infty } \big(\lambda_{i}^{(n)}\big)^{\frac{1}{n}} = \frac{\tilde{e}_i(\Lambda)}{\tilde{e}_{i-1}(\Lambda)}
\end{align*}
for $1\le i \le d-k$, and $\lim_{n\rightarrow\infty } \big(\lambda_{i}^{(n)}\big)^{\frac{1}{n}} =0$ for $d-k+1 \le i \le d$.
\end{Theorem}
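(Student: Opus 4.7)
The plan is to reduce the theorem to a single multiplicativity identity for the normalized elementary symmetric polynomials and then carry out an elementary squeeze. Reading the definition of $\boxtimes_d$ at the level of coefficients, the first step is to verify the multiplicativity
\begin{equation*}
\widetilde{e}_i\big(\Lambda \boxtimes_d M\big) = \widetilde{e}_i(\Lambda)\,\widetilde{e}_i(M),
\end{equation*}
where $\Lambda$ and $M$ denote the root multisets of two degree-$d$ polynomials $p, q$; this is immediate from the formula $(p \boxtimes_d q)_i = p_i q_i / \binom{d}{i}$. Iterating, if $\Lambda^{(n)} = \{\lambda_1^{(n)}, \ldots, \lambda_d^{(n)}\}$ is the root multiset of $p^{\boxtimes_d n}$, then
\begin{equation*}
e_i\big(\Lambda^{(n)}\big) = \binom{d}{i}\,\widetilde{e}_i(\Lambda)^{n}, \qquad 0 \le i \le d.
\end{equation*}
Beyond this identity, no structural property of $\boxtimes_d$ is needed; the theorem reduces to a purely elementary asymptotic question about recovering individual roots from their elementary symmetric polynomials.

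Setting $a_i := \widetilde{e}_i(\Lambda)$, the hypothesis that $\Lambda$ has exactly $k$ zeros gives $a_i > 0$ for $0 \le i \le d-k$ and $a_i = 0$ for $i > d-k$, so $e_{d-k+1}(\Lambda^{(n)}) = 0$. Since all roots of $p^{\boxtimes_d n}$ are non-negative, the vanishing of this elementary symmetric polynomial forces at least $k$ of them to be zero, that is $\lambda_{d-k+1}^{(n)} = \cdots = \lambda_d^{(n)} = 0$, settling the case $i \ge d-k+1$ of the theorem.

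For $1 \le i \le d-k$, I would sandwich the product of the $i$ largest roots against $e_i$: since $\lambda_1^{(n)}\cdots\lambda_i^{(n)}$ is simultaneously the maximum and one of the $\binom{d}{i}$ non-negative summands of $e_i(\Lambda^{(n)})$,
\begin{equation*}
\lambda_1^{(n)}\cdots\lambda_i^{(n)} \;\le\; \binom{d}{i}\,a_i^{n} \;\le\; \binom{d}{i}\,\lambda_1^{(n)}\cdots\lambda_i^{(n)},
\end{equation*}
equivalently $a_i^n \le \lambda_1^{(n)}\cdots\lambda_i^{(n)} \le \binom{d}{i}\,a_i^n$. Taking $n$-th roots and letting $n\to\infty$ (so $\binom{d}{i}^{1/n} \to 1$), the squeeze yields $\big(\lambda_1^{(n)}\cdots\lambda_i^{(n)}\big)^{1/n} \to a_i$ for every $0 \le i \le d-k$, with the convention $a_0 = 1$. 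Dividing the statement for consecutive values of $i$ and using $a_{i-1} > 0$ then gives
\begin{equation*}
\big(\lambda_i^{(n)}\big)^{1/n} = \frac{\big(\lambda_1^{(n)}\cdots\lambda_i^{(n)}\big)^{1/n}}{\big(\lambda_1^{(n)}\cdots\lambda_{i-1}^{(n)}\big)^{1/n}} \;\longrightarrow\; \frac{a_i}{a_{i-1}} = \frac{\widetilde{e}_i(\Lambda)}{\widetilde{e}_{i-1}(\Lambda)},
\end{equation*}
as required.

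I do not anticipate a genuine obstacle here. The most delicate point worth double-checking is that the coefficient identity continues to function cleanly in the degenerate case $k>0$ and that the zero roots propagate correctly through the iteration of $\boxtimes_d$; both are transparent from the identity $e_i(\Lambda^{(n)}) = \binom{d}{i} a_i^n$. As a sanity check, the limiting ratios $a_i/a_{i-1}$ are automatically non-increasing by Maclaurin's inequality $\widetilde{e}_i^2 \ge \widetilde{e}_{i-1}\widetilde{e}_{i+1}$, which matches the ordering of the $\lambda_i^{(n)}$, although this observation plays no role in the argument itself.
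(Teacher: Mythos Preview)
Your proof is correct and follows essentially the same approach as the paper: both first establish the multiplicativity identity $\widetilde{e}_i\big(\Lambda^{(n)}\big) = \widetilde{e}_i(\Lambda)^n$ (the paper's Lemma~\ref{lem:free_multiplicative_polynomial}) and then run an elementary squeeze with binomial constants that disappear after taking $n$-th roots. The only difference is in how the squeeze is organized: the paper bounds the ratio $e_i\big(\Lambda^{(n)}\big)/e_{i-1}\big(\Lambda^{(n)}\big)$ directly between $\binom{d}{i}^{-1}\lambda_i^{(n)}$ and $\binom{d}{i-1}\lambda_i^{(n)}$, whereas you bound the partial product $\lambda_1^{(n)}\cdots\lambda_i^{(n)}$ between $a_i^n$ and $\binom{d}{i}a_i^n$ and divide consecutive limits at the end. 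Your packaging is marginally cleaner in that it handles all $1\le i\le d-k$ uniformly, while the paper treats $i=1$ separately; conversely, the paper's ratio bound yields $\lambda_i^{(n)}$ in one step without the final division. Your closing sanity check via $\widetilde{e}_i^2 \ge \widetilde{e}_{i-1}\widetilde{e}_{i+1}$ is exactly the Newton inequality the paper records in the remark following the proof.
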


The paper consists of $4$ sections.
In Section~\ref{section2}, we introduce some concepts and preliminary results on free probability and finite free probability theories. In Section~\ref{section3}, we study the roots of finite free multiplicative convolution of polynomials and provide a proof of our main result (Theorem \ref{thm:main}).
In Section~\ref{section4}, we investigate the behavior of the empirical root distribution of polynomials obtained by the LLN for finite free multiplicative convolution when their degree tends to infinity. In the last of this section, we give a conjecture related to a connection between LLNs for $\boxtimes_d$ and $\boxtimes$ from evidence obtained by this section.

\section{Preliminaries}\label{section2}
\subsection{Free multiplicative convolution}\label{section2.1}

In this section, we introduce free multiplicative convolution and its characterization via the S-transform (see \cite{BV93} for more details). For a probability measure $\mu\neq \delta_0$ on $[0,\infty)$, we define
\begin{align*}
\psi_\mu(z):=\int_0^\infty \frac{tz}{1-tz}\ \mu({\rm d}t), \qquad z\in \mathbb{C}\setminus [0,\infty).
\end{align*}
It is known that its inverse function $\psi_\mu^{-1}$ exists in a neighborhood of $(\mu(\{0\})-1,0)$, and so we define the {\it S-transform} of $\mu$ by
\begin{align*}
S_\mu(z):=\frac{z+1}{z}\psi_\mu^{-1}(z), \qquad z\in (\mu(\{0\})-1,0).
\end{align*}
According to \cite{BV93}, for probability measures $\mu,\nu\neq \delta_0$ on $[0,\infty)$, the free multiplicative convolution $\mu\boxtimes \nu$ is characterized by
\begin{align*}
S_{\mu\boxtimes \nu}(z)=S_\mu(z)S_\nu(z),
\end{align*}
for all $z$ in the common interval where all three S-transform are defined. Note that the common interval is not empty since $\big(\mu\boxtimes \nu\big)(\{0\})=\max\{\mu(\{0\}),\nu(\{0\})\}$ (see \cite[Lemma 6.9]{BV93}).

The LLN for free multiplicative convolution of a probability measure on $[0,\infty)$ was obtained by Tucci \cite{Tucci10} and Haagerup and M\"{o}ller \cite{HM13}.

\begin{Proposition}\label{prop:HMlimit}
Let us consider $\mu\in \calP_+$. As $n\rightarrow\infty$, the sequence of $\big(\mu^{\boxtimes n}\big)^{\frac{1}{n}}$ converges weakly to the measure $\Phi(\mu)\in\calP_+$ characterized by
\begin{align*}
\Phi(\mu)&\left(\left[0,\frac{1}{S_\mu(t-1)}\right]\right)=t, \qquad t\in (\mu(\{0\}),1),\\
\Phi(\mu)&(\{0\})=\mu(\{0\}).
\end{align*}
Moreover, the support of the measure $\Phi(\mu)$ is the closure of the interval
\begin{align*}
\left(\left(\int_0^\infty t^{-1}  \mu({\rm d}t)\right)^{-1}, \int_0^\infty t \, \mu({\rm d}t)\right) \subset [0,\infty].
\end{align*}
\end{Proposition}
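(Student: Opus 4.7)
The plan is to exploit the multiplicativity $S_{\mu^{\boxtimes n}}(z)=S_\mu(z)^n$ together with a characterization of the CDF of a probability measure on $[0,\infty)$ via its $S$-transform. Assume $\mu\neq\delta_0$. The atom at the origin is handled directly: the Bercovici--Voiculescu formula $(\sigma\boxtimes\tau)(\{0\})=\max\{\sigma(\{0\}),\tau(\{0\})\}$ gives $\mu^{\boxtimes n}(\{0\})=\mu(\{0\})$ for every $n\geq 1$, and since $x\mapsto x^{1/n}$ fixes $0$, the measure $\nu_n:=(\mu^{\boxtimes n})^{1/n}$ satisfies $\nu_n(\{0\})=\mu(\{0\})=\Phi(\mu)(\{0\})$ for all $n$.

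For the continuous part, define the candidate quantile function $T_\sigma(t):=1/S_\sigma(t-1)$ on $(\sigma(\{0\}),1)$. Using that $\psi_\sigma$ is strictly increasing on $(-\infty,0)$ with range $(\sigma(\{0\})-1,0)$, one checks that $T_\sigma$ is strictly increasing and continuous, and expanding $\psi_\sigma$ at the endpoints of $(-\infty,0)$ gives
\[
\lim_{t\to\sigma(\{0\})^+}T_\sigma(t)=\Big(\int_0^\infty t^{-1}\,d\sigma(t)\Big)^{-1},\qquad \lim_{t\to 1^-}T_\sigma(t)=\int_0^\infty t\,d\sigma(t).
\]
The characterization $\Phi(\mu)([0,T_\mu(t)])=t$ in the statement is precisely that $T_\mu$ is the quantile function of $\Phi(\mu)|_{(0,\infty)}$, so once weak convergence $\nu_n\to\Phi(\mu)$ is established the support formula follows directly from these endpoint limits. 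By multiplicativity, $T_{\mu^{\boxtimes n}}(t)=T_\mu(t)^n$; weak convergence thus reduces to showing that the $t$-quantile $q_n(t)$ of $\mu^{\boxtimes n}$ satisfies $q_n(t)^{1/n}\to T_\mu(t)$, equivalently $\frac{1}{n}\log q_n(t)\to\log T_\mu(t)$, for each $t\in(\mu(\{0\}),1)$.

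The core step is this quantile convergence, which I would obtain by a two-sided squeeze. Evaluating the identity $\psi_{\mu^{\boxtimes n}}^{-1}(z)=\frac{z}{z+1}S_\mu(z)^n$ at $z=t-1$ produces a preimage $w_n(t)=\frac{t-1}{t}T_\mu(t)^{-n}$ with $\psi_{\mu^{\boxtimes n}}(w_n(t))=t-1$; a direct calculation shows that the Dirac mass $\delta_{T_\mu(t)^n}$ also satisfies $\psi_{\delta_{T_\mu(t)^n}}(w_n(t))=t-1$, which is the heuristic that $\mu^{\boxtimes n}$ is concentrated near level $T_\mu(t)^n$ at quantile $t$. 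To make this rigorous, for every $\delta>0$ one bounds $\mu^{\boxtimes n}([0,T_\mu(t)^n e^{-n\delta}])$ from above and $\mu^{\boxtimes n}([0,T_\mu(t)^n e^{n\delta}])$ from below, by comparing $\psi_{\mu^{\boxtimes n}}$ with the $\psi$-values of Dirac masses at the perturbed thresholds $T_\mu(t)^n e^{\pm n\delta}$ and invoking monotonicity/convexity properties of $\psi_\sigma$ on $(-\infty,0)$; this forces $q_n(t)\in[T_\mu(t)^n e^{-n\delta},T_\mu(t)^n e^{n\delta}]$ for large $n$, and letting $\delta\to 0$ yields $q_n(t)^{1/n}\to T_\mu(t)$. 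For unbounded $\mu$, truncate to $\mu_R:=\mu|_{[0,R]}/\mu([0,R])$, apply the bounded case, and let $R\to\infty$ using continuity of $\boxtimes$ and of $\sigma\mapsto\Phi(\sigma)$ under monotone weak approximation. The main obstacle is precisely this squeeze: the $S$-transform gives only one preimage at each $z$, and upgrading this single-point algebraic identity to uniform CDF control on the logarithmic scale, with errors vanishing as $n\to\infty$, is the delicate analytic step.
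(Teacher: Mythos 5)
The paper does not prove this proposition: it is imported as a known result, with references to \cite{Tucci10} and \cite{HM13}, so there is no internal proof to compare your attempt against. Your outline follows the same $S$-transform strategy as Haagerup and M\"oller: multiplicativity $S_{\mu^{\boxtimes n}}=S_\mu^{\,n}$, the identification of $t\mapsto 1/S_\mu(t-1)$ as the quantile function of the limit, the endpoint computations giving the support, and the treatment of the atom at $0$ via $(\sigma\boxtimes\tau)(\{0\})=\max\{\sigma(\{0\}),\tau(\{0\})\}$. Those parts are correct, and your check that $\psi_{\delta_{T_\mu(t)^n}}(w_n(t))=t-1$ with $w_n(t)=\frac{t-1}{t}T_\mu(t)^{-n}$ is a sound consistency computation.

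However, the argument has a genuine gap exactly where you flag it. The identity $\psi_{\mu^{\boxtimes n}}(w_n(t))=t-1$ is a single scalar equation, and the fact that a Dirac mass matches this one value of $\psi$ says nothing by itself about the distribution function of $\mu^{\boxtimes n}$ near the level $T_\mu(t)^n$. To obtain the squeeze $q_n(t)\in\big[T_\mu(t)^n{\rm e}^{-n\delta},\,T_\mu(t)^n{\rm e}^{n\delta}\big]$ you need quantitative Chebyshev-type inequalities relating $\psi_\sigma(w)$ for $w<0$ to tail masses, for instance $\sigma([a,\infty))\le\frac{1-aw}{-aw}\,(-\psi_\sigma(w))$ together with a matching lower bound, and you must then verify that the multiplicative losses in these inequalities are ${\rm e}^{o(n)}$ so that they vanish after taking $n$-th roots; none of this is carried out, and you explicitly defer ``the delicate analytic step.'' The truncation argument for unbounded $\mu$ is likewise asserted rather than proved: continuity of $\sigma\mapsto\Phi(\sigma)$ under monotone approximation cannot be invoked, since $\Phi$ is precisely the object whose existence and characterization are being established. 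As written, the proposal is a correct roadmap in the spirit of \cite{HM13}, but not a proof.
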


\begin{Example}\label{ex:HMlimit}\quad
\begin{enumerate}
\item[\rm (1)] Let ${\bf MP}$ be the Marchenko--Pastur distribution which is defined by
\begin{align*}
\mathbf{MP}({\rm d}t)=\frac{\sqrt{t(4-t)}}{2\pi t} \mathbf{1}_{(0,4)}(t)\, {\rm d}t.
\end{align*}
Then $\Phi({\bf MP})$ is the uniform distribution ${\bf U}(0,1)$ on the open interval $(0,1)$ by \cite{Ueda21}.
\item[\rm (2)] Consider $\mu=\frac{1}{2}(\delta_0+\delta_1)$. Then we have
\[
\Phi(\mu)=\frac{1}{2}\delta_0+\frac{1}{2(1-t)^2}\mathbf{1}_{(0,1/2)}(t)\,{\rm d}t,
\]
since $S_\mu(t)=(2+2t)/(1+2t)$, $t\in (-1/2,0)$ implies that $\Phi(\mu)([0,t])=2^{-1}(1-t)^{-1}$ for all $1/2 < t < 1$.
\end{enumerate}
\end{Example}

\subsection{Finite free multiplicative convolution}

In this section, we introduce some concepts and preliminary results on finite free probability that are used in the remainder of this paper; see \cite{AVP21, Marcus, MSS22} for more details.

\begin{Definition}\label{defn}
For monic polynomials $p$ and $q$ of degree $d$ which have only non-negative real roots:
\begin{align*}
p(x)=\sum_{i=0}^d (-1)^i p_ix^{d-i}, \qquad \text{ and } \qquad q(x)=\sum_{i=0}^d (-1)^i q_ix^{d-i},
\end{align*}
the {\it finite free multiplicative convolution} $p\boxtimes_d q$ is defined by
\begin{align*}
\big(p\boxtimes_d q\big) (x):= \sum_{i=0}^d(-1)^i \frac{p_iq_i}{\binom{d}{i}} x^{d-i}.
\end{align*}
\end{Definition}

In \cite[Theorem 1.5]{MSS22}, the finite free multiplicative convolution $\boxtimes_d$ can be realized as a characteristic polynomial of a product of positive definite matrices. That is, if $\chi_{A}$ and $\chi_{B}$ are characteristic polynomials of $d\times d$ positive definite matrices $A$ and $B$, respectively, then
\begin{align*}
\big(\chi_{A} \boxtimes_d \chi_{B}\big)(x)=\mathbb{E}_Q \det[x I - AQBQ^*],
\end{align*}
where the expectation is taken over unitary matrices $Q$ distributed uniformly on the unitary group in dimension $d$.
Moreover, if $p$ and $q$ have only non-negative real roots, then so is $p\boxtimes_d q$ (see \cite[Theorem 1.6]{MSS22}).

There is the following asymptotic relation between finite free multiplicative convolution and free multiplicative convolution by \cite[Theorem 1.4]{AVP21}.
Let us consider $p_d$ and $q_d$ as real-rooted monic polynomials of degree $d$ in which $p_d$ has only non-negative real roots. Assume the empirical root distributions of $p_d$ and $q_d$ converge weakly to $\mu\in \calP_{+,c}$ and $\nu\in \calP_c$ as $d\rightarrow\infty$, respectively. Then the empirical root distribution of $p_d \boxtimes_d q_d$ converges weakly to $\mu\boxtimes \nu$ as $d\rightarrow\infty$.

\section{Main result}\label{section3}

In this section, we provide the LLN for finite free multiplicative convolution. First, we calculate the $n$-th power of finite free multiplicative convolution of polynomials which have only non-negative real roots. Let $\Lambda^{(n)}$ be the set of roots of $p^{\boxtimes_d n}$ for $n \ge1$ and a monic polynomial $p$ of degree $d$ with non-negative real roots.
We put $\Lambda := \Lambda^{(1)}$, for short.

\begin{Lemma}\label{lem:free_multiplicative_polynomial}
Let $p$ be a monic polynomial of degree $d$ with non-negative real roots $\Lambda$. Then we have
\begin{align} \label{eq:symm_root}
\widetilde{e}_{i}\big(\Lambda^{(n)}\big)= \widetilde{e}_{i}(\Lambda)^n, \qquad 0\le i \le d.
\end{align}
In particular, the number of zeros in $\Lambda$ is the same as the one in $\Lambda^{(n)}$.
\end{Lemma}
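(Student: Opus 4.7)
The plan is to reduce everything to a coefficient identity for the iterated finite free multiplicative convolution and then read off the conclusion via the Vieta identification of coefficients with elementary symmetric polynomials.

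Write $p(x) = \sum_{i=0}^{d}(-1)^i p_i x^{d-i}$ with $p_i = e_i(\Lambda)$. Let $p^{\boxtimes_d n}(x) = \sum_{i=0}^{d}(-1)^i p_i^{(n)}\, x^{d-i}$. My first step is to prove by induction on $n \ge 1$ that
\begin{equation*}
p_i^{(n)} \;=\; \frac{p_i^{\,n}}{\binom{d}{i}^{\,n-1}}, \qquad 0 \le i \le d.
\end{equation*}
The base case $n=1$ is tautological. For the inductive step, Definition~\ref{defn} applied to $p \boxtimes_d p^{\boxtimes_d n}$ gives
\begin{equation*}
p_i^{(n+1)} \;=\; \frac{p_i \cdot p_i^{(n)}}{\binom{d}{i}} \;=\; \frac{p_i}{\binom{d}{i}} \cdot \frac{p_i^{\,n}}{\binom{d}{i}^{\,n-1}} \;=\; \frac{p_i^{\,n+1}}{\binom{d}{i}^{\,n}},
\end{equation*}
which closes the induction. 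Before invoking Vieta's formulas for $p^{\boxtimes_d n}$, I need $\Lambda^{(n)}$ to be a well-defined multiset of non-negative reals; this is guaranteed by \cite[Theorem~1.6]{MSS22}, applied inductively, which ensures that the finite free multiplicative convolution preserves the property of having only non-negative real roots.

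Now since $p_i^{(n)} = e_i\big(\Lambda^{(n)}\big)$ and $p_i = e_i(\Lambda)$, the displayed identity rewrites as
\begin{equation*}
\frac{e_i\big(\Lambda^{(n)}\big)}{\binom{d}{i}} \;=\; \left(\frac{e_i(\Lambda)}{\binom{d}{i}}\right)^{\!n},
\end{equation*}
which is exactly \eqref{eq:symm_root}. For the "in particular" assertion, observe that the number of zeros in a multiset $M$ of size $d$ of non-negative reals equals $d - \max\{\,i : e_i(M) \neq 0\,\}$, since $e_i(M)$ is nonzero precisely when one can choose $i$ strictly positive elements. Because $\widetilde{e}_i(\Lambda^{(n)}) = 0$ if and only if $\widetilde{e}_i(\Lambda) = 0$, these maxima coincide, and hence so do the numbers of zero roots.

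I do not foresee any substantive obstacle: the whole argument is a two-line induction together with a bookkeeping observation about vanishing symmetric functions. The only subtlety worth flagging explicitly is the appeal to \cite[Theorem~1.6]{MSS22} to guarantee that $p^{\boxtimes_d n}$ actually has non-negative real roots so that $\Lambda^{(n)}$ is legitimate and the second sentence of the lemma is meaningful.
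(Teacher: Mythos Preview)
Your proof is correct and follows essentially the same approach as the paper: both compute the coefficients of $p^{\boxtimes_d n}$ directly from Definition~\ref{defn} (you via explicit induction, the paper in one line using the normalized coefficients $\widetilde{e}_i$), identify them with elementary symmetric functions of the roots, and then read off the zero-count from the vanishing pattern of the $e_i$. The only cosmetic difference is that you make the appeal to \cite[Theorem~1.6]{MSS22} explicit, whereas the paper relies on this fact having been stated earlier in Section~\ref{section2}.
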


\begin{proof}
Note that
\begin{align*}
p(x)= \sum_{i=0}^{d} (-1)^i \binom{d}{i} \widetilde{e}_i(\Lambda) x^{d-i},
\end{align*}
then by Definition \ref{defn}
\begin{align*}
p^{\boxtimes_d n}(x)= \sum_{i=0}^d (-1)^i \binom{d}{i} \widetilde{e}_i(\Lambda)^{n} x^{d-i}.
\end{align*}
This is equivalent to \eqref{eq:symm_root}.
The rest is because the number of zeros in $\Lambda$ is equal to $k$ if and only if
\begin{align*}
e_{d-k}(\Lambda) > 0 \quad \text{and} \quad e_{d-k+1}(\Lambda) = 0,
\end{align*}
where we understand $e_{d+1}(\Lambda) =0$.
\end{proof}

Due to the relation \eqref{eq:symm_root}, we obtain the following LLN for roots of finite free multiplicative convolution of polynomials.

\begin{Theorem}\label{thm:LLN}
Consider a monic polynomial $p$ of degree $d$ with non-negative real roots $\Lambda$ and let $k=k(p)$ be the number of zeros in $\Lambda$.
Let $\Lambda^{(n)}:=\big\{\lambda_{1}^{(n)} \ge \lambda_{2}^{(n)} \ge\dots \ge \lambda_{d}^{(n)} \big\} $ be the set of non-negative real roots of $p^{\boxtimes_d n}$.
Then we get
\begin{align*}
\lim_{n\rightarrow\infty}\big(\lambda_{i}^{(n)}\big)^{\frac{1}{n}} =\frac{\widetilde{e}_i(\Lambda)}{\widetilde{e}_{i-1}(\Lambda)}, \qquad 1 \le i \le d-k.
\end{align*}
\end{Theorem}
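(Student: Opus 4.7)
The plan is to leverage Lemma \ref{lem:free_multiplicative_polynomial} to convert the statement about the individual root asymptotics into one about the asymptotics of the elementary symmetric polynomials $e_i(\Lambda^{(n)})$, and then to recover the ordered roots by a standard sandwich argument.

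First I would rewrite the identity \eqref{eq:symm_root} in the form $e_i(\Lambda^{(n)}) = \binom{d}{i}\widetilde{e}_i(\Lambda)^n$. Since $\Lambda$ has exactly $d-k$ strictly positive elements, $\widetilde{e}_i(\Lambda)>0$ precisely for $0\le i\le d-k$, and Lemma \ref{lem:free_multiplicative_polynomial} says that $\Lambda^{(n)}$ also has exactly $d-k$ positive elements (and $k$ zeros) for every $n$. Thus the roots to be analysed, $\lambda_1^{(n)}\ge\cdots\ge\lambda_{d-k}^{(n)}$, are all strictly positive, and their products enter into $e_i(\Lambda^{(n)})$ as the largest summand.

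Next I would establish the two-sided bound
\begin{equation*}
\lambda_1^{(n)}\cdots\lambda_i^{(n)} \;\le\; e_i\big(\Lambda^{(n)}\big) \;\le\; \binom{d}{i}\,\lambda_1^{(n)}\cdots\lambda_i^{(n)},\qquad 1\le i\le d-k,
\end{equation*}
which is immediate from $\lambda_j^{(n)}\ge 0$ and the fact that $\prod_{j\in J}\lambda_j^{(n)}\le \lambda_1^{(n)}\cdots\lambda_i^{(n)}$ for every $J\subset[d]$ with $|J|=i$. Substituting $e_i(\Lambda^{(n)}) = \binom{d}{i}\widetilde{e}_i(\Lambda)^n$, taking $n$-th roots, and using $\binom{d}{i}^{1/n}\to 1$, the squeeze yields
\begin{equation*}
\lim_{n\to\infty}\big(\lambda_1^{(n)}\cdots\lambda_i^{(n)}\big)^{1/n}=\widetilde{e}_i(\Lambda),\qquad 1\le i\le d-k.
\end{equation*}

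Finally I would divide two consecutive such limits; since $\widetilde{e}_{i-1}(\Lambda)>0$ in the range considered, the ratio gives $\lim_{n\to\infty}(\lambda_i^{(n)})^{1/n}=\widetilde{e}_i(\Lambda)/\widetilde{e}_{i-1}(\Lambda)$, as desired. The only place where any care is required is ensuring that the top $i$ roots are positive so that the product-to-ratio step is justified; this is precisely what the bound $i\le d-k$ and Lemma \ref{lem:free_multiplicative_polynomial} guarantee, so I do not expect a genuine obstacle in this argument.
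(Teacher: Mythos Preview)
Your argument is correct and rests on the same key inequality as the paper's proof (the product of the top $i$ roots dominates every summand of $e_i$), but the organization differs slightly. The paper sandwiches the \emph{ratio} $e_i(\Lambda^{(n)})/e_{i-1}(\Lambda^{(n)})$ directly between constant multiples of $\lambda_i^{(n)}$, treating $i=1$ separately, and then invokes \eqref{eq:symm_root} in the ratio form $\widetilde{e}_i(\Lambda^{(n)})/\widetilde{e}_{i-1}(\Lambda^{(n)})=(\widetilde{e}_i(\Lambda)/\widetilde{e}_{i-1}(\Lambda))^n$. You instead sandwich $e_i(\Lambda^{(n)})$ itself between the top product $\lambda_1^{(n)}\cdots\lambda_i^{(n)}$ and $\binom{d}{i}$ times it, deduce $\lim_n(\lambda_1^{(n)}\cdots\lambda_i^{(n)})^{1/n}=\widetilde{e}_i(\Lambda)$, and recover the individual root by dividing consecutive limits. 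Your route is marginally cleaner in that it handles all $1\le i\le d-k$ uniformly without a separate base case, at the cost of an extra (trivial) limit-of-quotients step; the paper's route makes the dependence on $\lambda_i^{(n)}$ explicit one step earlier. Substantively the two proofs are the same.
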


\begin{Remark}
Note that $\lambda_{i}^{(n)} = 0$ for $d-k+1 \le i \le d$ by Lemma \ref{lem:free_multiplicative_polynomial}.
\end{Remark}

\begin{proof}
For $1\le i \le d-k$, the equation \eqref{eq:symm_root} implies that
\begin{align}\label{eq:Lambda_S}
\frac{\widetilde{e}_{i}\big(\Lambda^{(n)}\big)}{\widetilde{e}_{i-1}\big(\Lambda^{(n)}\big)}= \left( \frac{\widetilde{e}_{i}(\Lambda)}{\widetilde{e}_{i-1}(\Lambda)}\right)^n.
\end{align}

For $i=1$, the equation \eqref{eq:Lambda_S} implies that
\begin{align*}
\frac{\lambda_{1}^{(n)}+\dots + \lambda_{d}^{(n)}}{d}=\widetilde{e}_1\big(\Lambda^{(n)}\big)=\widetilde{e}_1(\Lambda)^n.
\end{align*}
Since
\begin{align*}
\frac{\lambda_{1}^{(n)}}{d} \le \frac{\lambda_{1}^{(n)}+\dots + \lambda_{d}^{(n)}}{d} \le \lambda_{1}^{(n)},
\end{align*}
we obtain
\begin{align*}
\widetilde{e}_1(\Lambda) \le \big(\lambda_{1}^{(n)}\big)^{\frac{1}{n}} \le d^{\frac{1}{n}}\widetilde{e}_1(\Lambda),
\end{align*}
and therefore {$\big(\lambda_{1}^{(n)}\big)^{\frac{1}{n}} \rightarrow \widetilde{e}_1(\Lambda)$ as $n\rightarrow\infty$.}

For {$2\le i \le d-k$}, we have
\begin{align*}
\frac{e_{i}\big(\Lambda^{(n)}\big)}{e_{i-1}\big(\Lambda^{(n)}\big)} &= \frac{\sum_{J \subset [d],\ |J|=i} \left( \prod_{j \in J} \lambda_{j}^{(n)} \right)}{ \sum_{J \subset [d],\ |J|=i-1} \left( \prod_{j \in J} \lambda_{j}^{(n)} \right)}\ge \frac{\sum_{J \subset [d],\ |J|=i} \left( \prod_{j \in J} \lambda_{j}^{(n)} \right)}{\binom{d}{i-1} \lambda_{1}^{(n)}\lambda_{2}^{(n)}\cdots \lambda_{i-1}^{(n)}}\\
&\ge \frac{\lambda_{1}^{(n)}\lambda_{2}^{(n)}\cdots \lambda_{i}^{(n)}}{\binom{d}{i-1} \lambda_{1}^{(n)}\lambda_{2}^{(n)}\cdots \lambda_{i-1}^{(n)}}=\binom{d}{i-1}^{-1} \lambda_{i}^{(n)}.
\end{align*}
Similar to the estimation above, we obtain
\begin{align*}
\frac{e_{i}\big(\Lambda^{(n)}\big)}{e_{i-1}\big(\Lambda^{(n)}\big)}  &\le \frac{\binom{d}{i}\lambda_{1}^{(n)}\lambda_{2}^{(n)}\cdots \lambda_{i}^{(n)} }{\lambda_{1}^{(n)}\lambda_{2}^{(n)}\cdots \lambda_{i-1}^{(n)}} = \binom{d}{i} \lambda_{i}^{(n)}.
\end{align*}
Consequently, we have
\begin{align*}
\binom{d}{i}^{-1} \frac{e_i\big(\Lambda^{(n)}\big)}{e_{i-1}\big(\Lambda^{(n)}\big)} \le \lambda_{i}^{(n)} \le \binom{d}{i-1} \frac{e_i\big(\Lambda^{(n)}\big)}{e_{i-1}\big(\Lambda^{(n)}\big)},
\end{align*}
and therefore
\begin{align*}
\binom{d}{{i-1}}^{-1} \left(\frac{\widetilde{e}_i(\Lambda)}{\widetilde{e}_{i-1}(\Lambda)} \right)^n \le \lambda_{i}^{(n)} \le \binom{d}{{i}} \left(\frac{\widetilde{e}_i(\Lambda)}{\widetilde{e}_{i-1}(\Lambda)} \right)^n
\end{align*}
by using \eqref{eq:Lambda_S}.
Taking the $n$-th root of each value in the above inequality, we get
\begin{align*}
\binom{d}{{i-1}}^{-\frac{1}{n}} \frac{\widetilde{e}_i(\Lambda)}{\widetilde{e}_{i-1}(\Lambda)} \le \big(\lambda_{i}^{(n)}\big)^{\frac{1}{n}} \le \binom{d}{{i}}^{\frac{1}{n}} \frac{\widetilde{e}_i(\Lambda)}{\widetilde{e}_{i-1}(\Lambda)}.
\end{align*}
Hence, we obtain $\big(\lambda_{i}^{(n)}\big)^{\frac{1}{n}} \rightarrow   {\widetilde{e}_i(\Lambda)}/{\widetilde{e}_{i-1}(\Lambda)}$ as $n\rightarrow\infty$.
\end{proof}

For a positive number $\alpha > 0$ and a monic polynomial $p(x)=\prod_{i=1}^d (x-\lambda_{i})$ with non-negative real roots, we define
\[
p^{(\alpha)}(x):=\prod_{i=1}^d \big(x-\lambda_{i}^\alpha\big).
\]

\begin{Remark}\label{rem:LLN_poly}
According to Theorem \ref{thm:LLN}, if $p$ is a monic polynomial of degree $d$ with non-negative real roots $\Lambda$ and $k$ is the number of zeros in $\Lambda$, then
\begin{align*}
\lim_{n\rightarrow \infty} \big(p^{\boxtimes_d n}\big)^{\left(\frac{1}{n}\right)}(x)=x^{k}\prod_{i=1}^{d-k} \left(x-  \frac{\widetilde{e}_i(\Lambda)}{\widetilde{e}_{i-1}(\Lambda)} \right).
\end{align*}
Thus, the LLN for finite free multiplicative convolution of polynomials is established.
\end{Remark}

\begin{Remark}
Let $p$ be a monic polynomial of degree $d$ with non-negative real roots $\Lambda=\{\lambda_{1} \ge \dots  \ge \lambda_d\}$, $k$ the number of zeros in $\Lambda$, and $\lambda_{i}^{(n)}$ the $i$-th real root of $p^{\boxtimes_d n}$ for $1\le i \le d$. By Newton's inequality (see, e.g., \cite{HLP34}), we have
\begin{align}\label{eq:Newton}
\frac{\widetilde{e}_i(\Lambda)}{\widetilde{e}_{i-1}(\Lambda)}\ge \frac{\widetilde{e}_{i+1}(\Lambda)}{\widetilde{e}_i(\Lambda)}, \qquad 1\le i \le d-k-1,
\end{align}
where the equality holds if and only if $\lambda_{1}= \dots = \lambda_{d}$. However, the inequality \eqref{eq:Newton} can be directly proven by Theorem \ref{thm:LLN} due to $\lambda_{i}^{(n)} \ge \lambda_{i+1}^{(n)}$.

Consequently, we find the following remarkable phenomenon; except for trivial cases, the limit roots of $\big(p^{\boxtimes_d n}\big)^{\left(\frac{1}{n}\right)}$, not being zero, are all distinct.
\end{Remark}

In particular, we apply Theorem \ref{thm:LLN} to the (renormalized) Laguerre polynomial and a polynomial with two real roots.

\begin{Example}[case of the Laguerre polynomial]\label{ex:Laguerre}
Consider $d\ge 1$. We define
\begin{align*}
p(x):=d! (-d)^{-d} L_{0,d}(dx),
\end{align*}
where $L_{\alpha,d}$ is the {\it Laguerre polynomial} which is defined by
\begin{align*}
L_{\alpha,d}(x):=\sum_{i=0}^d \frac{(-x)^i}{i!} \binom{d+\alpha}{d-i}.
\end{align*}

Let $\Lambda$ be the set of positive real roots of $p$. Note that $p$ has no zero roots since $p(0)=d! (-d)^{-d} \neq 0$. Computing the polynomial $p$, we have
\begin{align*}
p(x) &=d! (-d)^{-d} \sum_{i=0}^d \frac{(-dx)^i}{i!} \binom{d}{d-i}\\
&=x^d+\sum_{j=1}^d (-1)^j \binom{d}{j} \left(\frac{d}{d}\cdot \frac{d-1}{d} \cdots \frac{d-j+1}{d} \right) x^{d-j},
\end{align*}
where the last equality holds by changing variable to $j=d-i$. This implies that
\begin{align*}
\widetilde{e}_j(\Lambda)=\frac{d}{d}\cdot \frac{d-1}{d} \cdots \frac{d-j+1}{d}, \qquad 1 \le j \le d.
\end{align*}

Suppose that $\lambda_{1}^{(n)}\ge \dots \ge \lambda_{d}^{(n)}$ are non-negative real roots of $p^{\boxtimes_d n}$. By Theorem \ref{thm:LLN}, we obtain
\begin{align*}
\lim_{n\rightarrow\infty}\big(\lambda_{i}^{(n)}\big)^{\frac{1}{n}} &= \frac{\widetilde{e}_i(\Lambda)}{\widetilde{e}_{i-1}(\Lambda)}=\frac{\frac{d}{d}\cdot \frac{d-1}{d} \cdots \frac{d-i+1}{d}}{\frac{d}{d}\cdot \frac{d-1}{d} \cdots \frac{d-i+2}{d}}=\frac{d-i+1}{d}
\end{align*}
for $1\le i \le d$, where note that $\tilde{e}_0(\Lambda)=1$.
\end{Example}

\begin{Example}[case of a polynomial with two roots]\label{ex:two_root}
Given $d\ge 1$, consider the following monic polynomial $p$ of $2d$ degree:
\begin{align*}
p(x)=x^d(x-1)^d, \qquad d \ge1,
\end{align*}
and put $\Lambda=\{\underbrace{1,\dots, 1}_{d \text{ times}} , \underbrace{0, \dots, 0}_{d \text{ times}}\}$ as the set of roots of $p$. Then we get
\begin{align*}
\widetilde{e}_j(\Lambda)=\frac{\binom{d}{j}}{\binom{2d}{j}}, \qquad 0 \le j \le d,
\end{align*}
and $\tilde{e_j}(\Lambda)=0$ for $d+1\le j \le 2d$. Suppose that $\lambda_{1}^{(n)} \ge \dots \ge\lambda_{d}^{(n)}$ are positive real roots of $p^{\boxtimes_d n}$. By Theorem \ref{thm:LLN}, we have
\begin{align*}
\lim_{n\rightarrow\infty} \big(\lambda_{i}^{(n)}\big)^{\frac{1}{n}}=\frac{d-i+1}{2d-i+1},
\end{align*}
for all $1\le i \le d$.
\end{Example}

At the end of this section, we mention the rate of convergence in the LLN for roots of finite free multiplicative convolution of polynomials.

\begin{Remark}
According to a proof of Theorem \ref{thm:LLN}, it is easy to see that
\begin{align*}
\log \big(\lambda_i^{(n)}\big)^{\frac{1}{n}} = \log \left(\frac{\widetilde{e}_i(\Lambda)}{\widetilde{e}_{i-1}(\Lambda)}\right)
+ O\left(\frac{1}{n}\right),
\end{align*}
as $n\rightarrow\infty$.

We demonstrate an example in which the rate of convergence is of order $1/n$ and it is optimal. Consider $d=2$ in Example \ref{ex:Laguerre}, that is, $p(x)=x^2-2x+2^{-1}$. As a consequent result of a proof of Lemma \ref{lem:free_multiplicative_polynomial}, we obtain $p^{\boxtimes_2 n} (x)=x^2-2x+2^{-n}$ for $n\in\N$. Hence (positive) real roots of $p^{\boxtimes_2 n}$ are given by
\[
\lambda_1^{(n)}=1+\sqrt{1-2^{-n}}, \qquad \lambda_2^{(n)}=1-\sqrt{1-2^{-n}}.
\]
It is easy to see that $\lim_{n\rightarrow\infty} \big(\lambda_1^{(n)}\big)^{\frac{1}{n}}=1$, $\lim_{n\rightarrow\infty} \big(\lambda_2^{(n)}\big)^{\frac{1}{n}} =1/2$ and also
\begin{align*}
n \left( \log\big(\lambda_1^{(n)}\big)^{\frac{1}{n}} - \log 1 \right) \rightarrow \log 2, \qquad
n\left(\log \big(\lambda_2^{(n)}\big)^{\frac{1}{n}} - \log \frac{1}{2} \right) \rightarrow -\log2,
\end{align*}
as $n\rightarrow\infty$. Consequently, the order $1/n$ is optimal.
\end{Remark}

\section{Relation to the LLN for free multiplicative convolution}\label{section4}

In this section, given a monic polynomial of degree $d$, we investigate how the empirical root distributions of their limit polynomial obtained by Theorem \ref{thm:LLN} (or Remark \ref{rem:LLN_poly}) converge weakly as $d\rightarrow\infty$. For the reason above, we emphasize their degree as follows.

Let $p_d$ be a monic polynomial of degree $d$ with non-negative real roots $\Lambda_d = \{\lambda_{1,d}\ge \dots \ge \lambda_{d,d}\}$ and let $k_d$ be the number of zeros in $\Lambda_d$.
Denote by $R_i(\Lambda_d)$ the limit roots of $\big(p_d^{\boxtimes_d n}\big)^{\left(\frac{1}{n}\right)}$ as $n\rightarrow\infty$ for $1\le i \le d$, that is,
\[
R_i(\Lambda_d) = \frac{\tilde{e}_i(\Lambda_d)}{\tilde{e}_{i-1}(\Lambda_d)},
\]
as provided in Theorem \ref{thm:LLN}.
In the following, we investigate relationships between the empirical root distributions:
\[
\mu_d:=\frac{1}{d}\sum_{i=1}^d \delta_{\lambda_{i,d}}, \qquad \text{and} \qquad \nu_d:=\frac{1}{d}\sum_{i=1}^d \delta_{R_{i}(\Lambda_d)}.
\]

\begin{Lemma}\label{lem:log}
Let $p_d$ be a monic polynomial of degree $d$ with non-negative real roots $\Lambda_d$. Assume that $k_d=0$ $($equivalently, $\lambda_{d,d}>0$ or $R_d(\Lambda_d)>0$$)$. Then we have
\begin{align*}
\int_0^\infty (\log t ) \, \mu_d ({\rm d}t) =\int_0^\infty (\log t) \, \nu_d ({\rm d}t).
\end{align*}
\end{Lemma}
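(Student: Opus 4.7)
The plan is to reduce both integrals to $\frac{1}{d}\log\bigl(\prod_i\lambda_{i,d}\bigr)$ via a telescoping product in the definition of $R_i(\Lambda_d)$. Since $k_d=0$, all roots $\lambda_{i,d}$ and all limit roots $R_i(\Lambda_d)$ are strictly positive, so the logarithms are well-defined.

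First I would rewrite both integrals as sums. The left-hand side is
\begin{equation*}
\int_0^\infty (\log t)\, \mu_d({\rm d}t) = \frac{1}{d}\sum_{i=1}^d \log \lambda_{i,d} = \frac{1}{d}\log\Bigl(\prod_{i=1}^d \lambda_{i,d}\Bigr) = \frac{1}{d}\log e_d(\Lambda_d),
\end{equation*}
and the right-hand side is
\begin{equation*}
\int_0^\infty (\log t)\, \nu_d({\rm d}t) = \frac{1}{d}\sum_{i=1}^d \log R_i(\Lambda_d) = \frac{1}{d}\log\Bigl(\prod_{i=1}^d R_i(\Lambda_d)\Bigr).
\end{equation*}

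Next I would exploit the telescoping structure. By definition $R_i(\Lambda_d) = \widetilde{e}_i(\Lambda_d)/\widetilde{e}_{i-1}(\Lambda_d)$, so the product collapses:
\begin{equation*}
\prod_{i=1}^d R_i(\Lambda_d) = \frac{\widetilde{e}_d(\Lambda_d)}{\widetilde{e}_0(\Lambda_d)} = \widetilde{e}_d(\Lambda_d) = \frac{e_d(\Lambda_d)}{\binom{d}{d}} = e_d(\Lambda_d),
\end{equation*}
since $\widetilde{e}_0(\Lambda_d)=1$. Combining the two displays yields the claimed equality.

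There is essentially no obstacle: the argument is a one-line telescoping identity once both integrals are expressed as $\frac{1}{d}$ times the log of the determinant $e_d(\Lambda_d)=\prod_i \lambda_{i,d}$. The only subtlety worth flagging is the hypothesis $k_d=0$, which is precisely what ensures $e_d(\Lambda_d)>0$ and that none of the logarithms diverges; this is the reason the lemma is stated under that assumption.
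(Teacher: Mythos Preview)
Your proof is correct and follows essentially the same approach as the paper: both arguments write the integrals as averages of logarithms, telescope the product $\prod_{i=1}^d R_i(\Lambda_d)=\widetilde{e}_d(\Lambda_d)/\widetilde{e}_0(\Lambda_d)=\widetilde{e}_d(\Lambda_d)$, and identify this with $\prod_{i=1}^d \lambda_{i,d}$. The only cosmetic difference is that the paper runs the chain of equalities in one direction starting from $\int(\log t)\,\nu_d({\rm d}t)$, while you compute both sides separately and match them.
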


\begin{proof}
Note that the integrals $\int_0^\infty (\log t) \, \mu_d({\rm d}t)$ and $\int_0^\infty (\log t) \, \nu_d({\rm d}t)$ are finite since $k_d=0$. A~direct computation shows that
\begin{align*}
\int_0^\infty (\log t) \, \nu_d({\rm d}t) &= \frac{1}{d}\sum_{i=1}^d \log R_i(\Lambda_d)\\
&=\frac{1}{d}\sum_{i=1}^d \log \frac{\widetilde{e}_i(\Lambda_d)}{\widetilde{e}_{i-1}(\Lambda_d)} \qquad \text{(by Theorem \ref{thm:LLN})}\\
&=\frac{1}{d}\log \widetilde{e}_d(\Lambda_d) \qquad \text{(since $\widetilde{e}_0(\Lambda_d)=1$)}\\
&=\frac{1}{d}\log \prod_{i=1}^d \lambda_{i,d}\\
&=\frac{1}{d}\sum_{i=1}^d \log \lambda_{i,d}=\int_0^\infty (\log t)\, \mu_d({\rm d}t).\tag*{\qed}
\end{align*}\renewcommand{\qed}{}
\end{proof}

We study how the empirical root distributions $\nu_d=\frac{1}{d}\sum_{i=1}^d \delta_{R_{i}(\Lambda_d)}$ behave as $d\rightarrow\infty$ when
$\mu_d=\frac{1}{d}\sum_{i=1}^d \delta_{\lambda_{i,d}}$ converges weakly to some probability measure on $[0,\infty)$.

\begin{Proposition}\label{prop:R}
Let $p_d$ be a monic polynomial of degree $d$ with non-negative real roots $\Lambda_d$. Assume that there exist $\mu\in \calP_{+,c}$ and a compact set $K$ in $[0,\infty)$, such that the measures $\mu_d$ and~$\mu$ are supported on $K$ for all $d\ge 1$, and such that $\mu_d \xrightarrow{w} \mu$ as $d\rightarrow\infty$.
Then we obtain
\[
R_1(\Lambda_d)\rightarrow \int_0^\infty t \, \mu({\rm d}t)
\]
as $d\rightarrow\infty$. In addition, if $0\notin K$, then
it satisfies that
\[
R_{d}(\Lambda_d)\rightarrow \left(\int_0^\infty t^{-1} \mu({\rm d}t)\right)^{-1} \qquad \text{and} \qquad
\int_0^\infty (\log t) \nu_d({\rm d}t) \rightarrow \int_0^\infty (\log t) \Phi(\mu)({\rm d}t)
\]
as $d\rightarrow\infty$, where $\Phi(\mu)$ is defined in Proposition~$\ref{prop:HMlimit}$.
\end{Proposition}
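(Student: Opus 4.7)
The plan is to handle the three limits separately; the first two follow essentially immediately once the relevant elementary symmetric functions are rewritten as integrals against $\mu_d$. For $R_1$, one has $R_1(\Lambda_d) = \widetilde{e}_1(\Lambda_d) = \int_0^\infty t\,\mu_d({\rm d}t)$, and the limit follows from weak convergence and continuity of $t\mapsto t$ on the common compact support $K$. For $R_d$, a direct calculation using $\widetilde{e}_d(\Lambda_d) = \prod_i \lambda_{i,d}$ and $\widetilde{e}_{d-1}(\Lambda_d) = \frac{1}{d}\bigl(\prod_i\lambda_{i,d}\bigr)\sum_i \lambda_{i,d}^{-1}$ gives $R_d(\Lambda_d) = \bigl(\int_0^\infty t^{-1}\,\mu_d({\rm d}t)\bigr)^{-1}$; the hypothesis $0\notin K$ makes $t\mapsto t^{-1}$ continuous and bounded on $K$, so weak convergence followed by taking reciprocals yields the stated limit.

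For the log integral, the first step is to apply Lemma~\ref{lem:log}: since $0\notin K$ forces $k_d = 0$ for all $d$, we have $\int_0^\infty \log t\,\nu_d({\rm d}t) = \int_0^\infty \log t\,\mu_d({\rm d}t)$. Continuity of $\log$ on $K$ and weak convergence then give $\int\log t\,\mu_d({\rm d}t) \to \int\log t\,\mu({\rm d}t)$, so the statement reduces to the identity
\[
\int_0^\infty \log t\,\mu({\rm d}t) = \int_0^\infty \log t\,\Phi(\mu)({\rm d}t).
\]
I plan to prove this identity via the quantile characterization in Proposition~\ref{prop:HMlimit}: since $\mu(\{0\}) = 0$, that proposition identifies the quantile function of $\Phi(\mu)$ as $u\mapsto 1/S_\mu(u-1)$ on $(0,1)$, so $\int\log s\,\Phi(\mu)({\rm d}s) = -\int_{-1}^0 \log S_\mu(z)\,{\rm d}z$. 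Writing $S_\mu(z) = (z+1)\psi_\mu^{-1}(z)/z$ and substituting $w = \psi_\mu^{-1}(z)$ should transform the right-hand side into $\int_0^\infty \log t\,\mu({\rm d}t)$ after an integration by parts.

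The main obstacle I anticipate is the bookkeeping in this final calculation, since the endpoints $z\to -1^+$ and $z\to 0^-$ produce logarithmic singularities that cancel only after careful handling. A conceptually cleaner alternative worth pursuing in parallel is to use multiplicativity of the Fuglede--Kadison determinant, $\int\log t\,(\mu\boxtimes\nu)({\rm d}t) = \int\log t\,\mu({\rm d}t) + \int\log t\,\nu({\rm d}t)$, to deduce $\int\log t\,\bigl(\mu^{\boxtimes n}\bigr)^{1/n}({\rm d}t) = \int\log t\,\mu({\rm d}t)$ for every $n$, and then pass to the weak limit via Proposition~\ref{prop:HMlimit}. This second route trades the boundary-term analysis for a uniform support bound on $\bigl(\mu^{\boxtimes n}\bigr)^{1/n}$ away from $0$ and $\infty$, which is available from the support description in Proposition~\ref{prop:HMlimit} when $\mathrm{supp}\,\mu$ is bounded and disjoint from $\{0\}$.
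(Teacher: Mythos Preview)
Your argument for $R_1(\Lambda_d)$, for $R_d(\Lambda_d)$, and for the reduction of the log integral via Lemma~\ref{lem:log} is exactly what the paper does. The only divergence is at the very last step: the paper does not prove the identity
\[
\int_0^\infty \log t\,\mu({\rm d}t)=\int_0^\infty \log t\,\Phi(\mu)({\rm d}t)
\]
at all, but simply invokes \cite[Proposition~1]{HM13}. Your two proposed routes to this identity are both sound; the second one (multiplicativity of the Fuglede--Kadison determinant plus a uniform support bound $[a,b]$ with $a>0$ for $(\mu^{\boxtimes n})^{1/n}$, which follows from the operator inequality $a^n\le X_1^{1/2}\cdots X_n\cdots X_1^{1/2}\le b^n$) is essentially how Haagerup and M\"oller prove the cited proposition, so you are in effect reproducing rather than bypassing the reference. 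The first route via the explicit quantile $u\mapsto 1/S_\mu(u-1)$ and a change of variables is a genuinely different computation; it works, but the endpoint cancellations you flag are real and make it the longer of the two options, with no payoff beyond self-containment.
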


\begin{proof}
By the assumptions and Theorem \ref{thm:LLN}, we get
\begin{align*}
R_1(\Lambda_d)=\widetilde{e}_{1}(\Lambda_d)=\frac{1}{d} \sum_{i=1}^d \lambda_{i,d}=\int_0^\infty t\, \mu_d({\rm d}t)\rightarrow \int_0^\infty t \, \mu({\rm d}t)
\end{align*}
as $d\rightarrow\infty$.

Moreover, we assume that $0\notin K$ in the following.
Note that the functions
$t\mapsto t^{-1}$ and $t\mapsto \log t$ are  bounded and continuous on
$K$.
We then obtain
\begin{align*}
R_{d}(\Lambda_d)=\frac{\widetilde{e}_{d}(\Lambda_d)}{\widetilde{e}_{d-1} (\Lambda_d)}=\left(\frac{1}{d} \sum_{i=1}^{d} \lambda_{i,d}^{-1} \right)^{-1}=\left(\int_0^\infty t^{-1}\mu_d({\rm d}t)\right)^{-1}\rightarrow \left(\int_0^\infty t^{-1}\mu({\rm d}t)\right)^{-1}
\end{align*}
as $d\rightarrow\infty$.
It follows that $k_d=0$ from $0\notin K$.
By Lemma \ref{lem:log}, we obtain
\begin{align*}
\int_0^\infty (\log t) \nu_d({\rm d}t)=\int_0^\infty (\log t) \mu_d({\rm d}t)\rightarrow \int_0^\infty (\log t) \mu({\rm d}t).
\end{align*}
According to \cite[Proposition 1]{HM13}, the last integral equals to $\int_0^\infty (\log t) \Phi(\mu)({\rm d}t)$, and therefore we get the convergence.
\end{proof}

We give examples of the weak limit laws of empirical root distributions $\frac{1}{d}\sum_{i=1}^d \delta_{R_i(\Lambda_d)}$ as $d\rightarrow\infty$.

\begin{Example}\label{ex:converges}\quad
\begin{enumerate}\itemsep=0pt
\item[\rm (1)] In Example \ref{ex:Laguerre}, it was shown that $R_i(\Lambda_d)=\frac{d-i+1}{d}$ when we consider
\[
p_d(x)=d!(-d)^{-d}L_{0,d}(dx)
\] with non-negative real roots $\Lambda_d$ for each $d\ge 1$. It is easy to see that
\begin{align*}
\frac{1}{d}\sum_{i=1}^d \delta_{\frac{d-i+1}{d}}\xrightarrow{w} {\bf U}(0,1)=\Phi({\bf MP}),
\end{align*}
as $d\rightarrow\infty$, where the last equality holds due to Example \ref{ex:HMlimit}\,(1).

\item[\rm (2)] In Example \ref{ex:two_root}, we obtained that $R_i(\Lambda_{2d})=\frac{d-i+1}{2d-i+1}$ for $1\le i \le d$ when $p_d(x)=x^d(x-1)^d$ with non-negative real roots $\Lambda_{2d}$. For any bounded continuous functions $f$ on $[0,\infty)$, we get
\begin{align*}
\int_0^\infty f(t) \left[\frac{1}{2} \delta_0 + \frac{1}{2d} \sum_{i=1}^d \delta_{\frac{d-i+1}{2d-i+1}}\right] ({\rm d}t) &=\frac{1}{2} f(0) + \frac{1}{2d}\sum_{i=1}^d f\left(\frac{d-i+1}{2d-i+1}\right)\\
&=\frac{1}{2} f(0) + \frac{1}{2d}\sum_{\ell=1}^d f\left(\frac{\ell}{d+\ell}\right)\\
&=\frac{1}{2}f(0)+\frac{1}{2d} \sum_{\ell=1}^d f\left(\frac{\frac{\ell}{d}}{1+\frac{\ell}{d}} \right)\\
&\rightarrow \frac{1}{2} f(0) +\frac{1}{2}\int_0^1 f\left( \frac{t}{1+t}\right){\rm d}t\\
&=\frac{1}{2} f(0)+\frac{1}{2} \int_0^{1/2}  \frac{f(u)}{(1-u)^2}{\rm d}u,
\end{align*}
where the last equality holds by changing variable to $u=t/(1+t)$, and therefore
\[
\frac{1}{2} \delta_0 + \frac{1}{2d} \sum_{i=1}^d \delta_{\frac{d-i+1}{2d-i+1}} \xrightarrow{w} \frac{1}{2}\delta_0+ \frac{1}{2(1-t)^2}\mathbf{1}_{(0,1/2)}(t){\rm d}t= \Phi\left(\frac{1}{2}(\delta_0+\delta_1)\right),
\]
as $d\rightarrow\infty$, where the last equality holds due to Example \ref{ex:HMlimit}\,(2).
\end{enumerate}
\end{Example}

According to Proposition \ref{prop:R} and Example \ref{ex:converges}, it is natural to conjecture the following statement.

\begin{Conjecture}
Let $p_{d}$ be a monic polynomial of degree $d$ with non-negative real roots $\Lambda_d=\{\lambda_{1,d}\ge \dots \ge \lambda_{d,d}\}$. Let us further consider $\mu \in \calP_{+,c}$. Assume that the empirical root distributions of $p_d$, that is,
$\frac{1}{d}\sum_{i=1}^{d} \delta_{\lambda_{i,d}}$ converge weakly to $\mu$ as $d\rightarrow\infty$. Then we obtain%
\begin{align*}
\frac{1}{d}\sum_{i=1}^{d} \delta_{R_i(\Lambda_d)} \xrightarrow{w} \Phi(\mu)
\end{align*}
as $d\rightarrow\infty$.
\end{Conjecture}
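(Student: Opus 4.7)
The plan is to prove the conjecture via quantile convergence. Since $R_1(\Lambda_d)\ge\cdots\ge R_d(\Lambda_d)$ by Newton's inequality, the quantiles of $\nu_d$ are exactly its atoms read in decreasing order, while Proposition~\ref{prop:HMlimit} describes the quantiles of $\Phi(\mu)$ explicitly as $Q_{\Phi(\mu)}(t)=1/S_\mu(t-1)$ for $t\in(\mu(\{0\}),1)$. The weak convergence $\nu_d\xrightarrow{w}\Phi(\mu)$ will therefore reduce to (a) $R_{j_d}(\Lambda_d)\to 1/S_\mu(-s)$ whenever $j_d/d\to s\in(0,1-\mu(\{0\}))$ and (b) the correct mass $\mu(\{0\})$ of $\nu_d$ accumulating at $0$. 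As in Proposition~\ref{prop:R}, I would first reduce to the case that all $\Lambda_d$ lie in a common compact set $[0,M]$.

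The main analytic step is a saddle-point estimate for the generating function
\[
h_d(z):=\prod_{i=1}^d(1+\lambda_{i,d}z)=\sum_{j=0}^d e_j(\Lambda_d)\,z^j,\qquad \tfrac1d\log h_d(z)\longrightarrow F(z):=\int_0^\infty\log(1+tz)\,\mu(\mathrm dt),
\]
the convergence being uniform on compacts of a complex neighbourhood of $(0,\infty)$. For $s\in(0,1-\mu(\{0\}))$ the critical-point equation $rF'(r)=\int\frac{tr}{1+tr}\mu(\mathrm dt)=s$ has a unique positive solution $r(s)$, which coincides with $-\psi_\mu^{-1}(-s)$ since $\psi_\mu(-r)=-\int\frac{tr}{1+tr}\mu(\mathrm dt)$. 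Extracting coefficients via $e_j(\Lambda_d)=\frac{1}{2\pi i}\oint_{|z|=r(j/d)}h_d(z)z^{-j-1}\,\mathrm dz$ and applying a refined saddle-point expansion uniform for $j/d$ in compact subsets of $(0,1-\mu(\{0\}))$ yields
\[
\log\frac{e_{j_d}(\Lambda_d)}{e_{j_d-1}(\Lambda_d)}\longrightarrow -\log r(s),
\]
which uses the identity $g'(s)=-\log r(s)$ for $g(s):=F(r(s))-s\log r(s)$, an immediate consequence of the critical-point relation.

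Translating this to $R_j$ via $R_{j_d}(\Lambda_d)=\frac{j_d}{d-j_d+1}\cdot \frac{e_{j_d}(\Lambda_d)}{e_{j_d-1}(\Lambda_d)}$ gives $R_{j_d}\to s/((1-s)r(s))$, and unpacking $S_\mu(-s)=\frac{-s+1}{-s}\psi_\mu^{-1}(-s)=(1-s)r(s)/s$ identifies this limit with $1/S_\mu(-s)$, proving~(a). For~(b), weak convergence applied to the closed set $\{0\}$ gives $\limsup_d k_d/d\le\mu(\{0\})$; combining this with Lemma~\ref{lem:log} (restricted to the positive roots), Proposition~\ref{prop:R}, and the equality $\int\log t\,\Phi(\mu)(\mathrm dt)=\int\log t\,\mu(\mathrm dt)$ from \cite[Proposition 1]{HM13}, the logarithmic balance together with the uniform upper bound $R_j\le R_1\le\int t\,\mu_d(\mathrm dt)$ forces the remaining mass of $\nu_d$ to concentrate at~$0$.

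The main obstacle is the refined saddle-point estimate in the second paragraph: extracting $\log(e_j/e_{j-1})$ rather than merely the leading-order exponent of $\log e_j$ requires an expansion of the form $\log e_j=d\,g(j/d)-\tfrac12\log d+c(j/d)+o(1)$ with $c$ continuous and $o(1)$ uniform in $j$, which in turn demands control on the rate of convergence $\tfrac1d\log h_d\to F$ beyond the weak convergence $\mu_d\xrightarrow{w}\mu$. A secondary subtlety is that when $\mu(\{0\})>0$ the atom at $0$ of $\Phi(\mu)$ need not come from exactly-zero roots of $p_d$; one must then show that the saddle point $r(s)\to+\infty$ as $s\uparrow 1-\mu(\{0\})$ drives $R_j\to 0$ in the correct quantitative manner, converting small positive roots of $p_d$ into $R_j$'s near $0$ with the right counting.
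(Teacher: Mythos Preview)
The statement is presented in the paper as a \emph{Conjecture} and is not proved there; the paper offers only Proposition~\ref{prop:R} and Example~\ref{ex:converges} as supporting evidence, so there is no ``paper's own proof'' to compare your attempt against.

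On the merits of your plan: the quantile reduction and the identification of the target limit $1/S_\mu(-s)$ via $r(s)=-\psi_\mu^{-1}(-s)$ are correct, and the obstacle you name is real for the method as you state it --- a leading-order estimate $\log e_j=d\,g(j/d)+o(d)$ says nothing about $\log e_j-\log e_{j-1}$, and a uniform $o(1)$ remainder would indeed require more than weak convergence of $\mu_d$. What you are missing is that this refinement is unnecessary. Newton's inequality, which you already invoke to order the $R_i$, says precisely that $j\mapsto\log\tilde e_j(\Lambda_d)$ is concave; pointwise convergence of concave functions to a differentiable limit automatically upgrades to convergence of one-sided (hence discrete) derivatives. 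Thus it suffices to prove only the leading-order statement $\tfrac1d\log\tilde e_{\lfloor sd\rfloor}(\Lambda_d)\to g(s)$ on $(0,1-\mu(\{0\}))$, and this follows from weak convergence alone by Legendre duality with $\tfrac1d\log h_d(r)=\int_0^\infty\log(1+tr)\,\mu_d({\rm d}t)\to F(r)$ (the integrand being bounded continuous on the common compact $K$); the strict monotonicity of $r\mapsto rF'(r)$ you already noted gives differentiability of $g$. Your step~(b) is also incomplete: when $\mu(\{0\})>0$ but $k_d=0$, Lemma~\ref{lem:log} is unavailable and the ``logarithmic balance'' does not by itself force the $R_j$ with $j/d>1-\mu(\{0\})$ toward $0$; this needs a separate argument, for instance that on that range the concave functions $\tfrac1d\log\tilde e_{\lfloor sd\rfloor}$ tend to $-\infty$, which drives their right derivatives (hence $\log R_{\lfloor sd\rfloor}$) to $-\infty$.
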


\subsection*{Acknowledgements}

This work was supported by JSPS Open Partnership Joint Research Projects JPJSBP120209921 and Bilateral Joint Research Projects (JSPS-MEAE-MESRI, JPJSBP120203202). Moreover, K.F. was supported by the Hokkaido University Ambitious Doctoral Fellowship (Information Science and AI).
Y.U. was supported by JSPS Grant-in-Aid for Scientific Research (B) 19H01791 and also JSPS Grant-in-Aid for Young Scientists 17H04823 and 22K13925.
The both authors would like to thank Nikhil Srivastava (UC Berkeley) for giving us a three-day lecture on finite free probability theory at Kyoto University.
The both authors also thank Beno\^it Collins (Kyoto University) and Noriyoshi Sakuma (Nagoya City University) for introducing the discussion.
Thanks to the lectures, the authors could start to study the LLN for finite free multiplicative convolution.
Furthermore, the authors would also like to thank the referees who gave useful comments to improve this paper.

\pdfbookmark[1]{References}{ref}
\LastPageEnding

\end{document}